\date{}
\newcommand{\R}{{\mathbb R}}
\newcommand{\N}{{\mathbb N}}
\newcommand{\D}{{\mathcal D}}
\newcommand{\supp}{\mathop{\rm supp}}
\renewcommand{\div}{\mathop{\rm div}\nolimits}
\newcommand{\codim}{\mathop{\rm codim}}
\renewcommand{\Im}{\mathop{\rm Im}}
\numberwithin{equation}{section}
\theoremstyle{plain} % Далее вводятся окружения типа "теорема"
\newtheorem{theorem}{Theorem}[section]
\newtheorem{lemma}{Lemma}[section]
\newtheorem{corollary}{Corollary}[section]
\theoremstyle{definition}
\newtheorem{definition}{Definition}[section]
\newtheorem{remark}{Remark}[section]
\begin{document}
\title{On evolutionary equations related to skew-symmetric spatial operators}
\author{Evgeny Yu. Panov \\ St. Petersburg Department of V.\,A.~Steklov Institute \\ of Mathematics of the Russian Academy of Sciences, \\
St. Petersburg, Russia}
\maketitle

\begin{abstract}
We study generalized solutions of an evolutionary equation related to some densely defined skew-symmetric operator in a real Hilbert space. We establish existence of a contractive semigroup, which provides generalized solutions, and suggest a criteria of uniqueness of this semigroup. We also find a stronger criteria of uniqueness of generalized solutions.   Applications to transport equations with  solenoidal (and generally discontinuous) coefficients are given.
\end{abstract}

\section{Introduction}

Let $H$ be a real Hilbert space, $A_0$ be a skew-symmetric linear operator in $H$ defined on a dense domain
$X_0=D(A_0)\subset H$. We recall that the operator $A_0$ is skew-symmetric if $(A_0u,v)=-(u,A_0v)$ $\forall u,v\in X_0$, where $(\cdot,\cdot)$ denotes the inner product in $H$.
Alternatively, the skew-symmetricity may be described as $-A_0\subset (A_0)^*$, where $(A_0)^*$ is the adjoint operator to $A_0$. Since an adjoint operator is closed, there exists the closure $A$ of the operator $A_0$. This operator $A$ is a closed skew-symmetric operator, and $A^*=(A_0)^*$.
We underline that in view of the identity $2((A_0u,v)+(u,A_0v))=(A_0(u+v),u+v)-(A_0(u-v),u-v)$ the skew-symmetricity of $A_0$ is equivalent to the condition $(A_0u,u)=0$ $\forall u\in X_0$.

The operator $A^*$ is an extension of the operator $-A$, and may be not skew-symmetric. Skew-symmericity of $A^*$ holds only in the case $A^*=-A$, that is, when the operator $A$ is skew-adjoint.

We consider the evolutionary equation
\begin{equation}\label{e1}
u'-A^*u =0, \quad u=u(t)\in L^\infty_{loc}(\R_+,H),
\end{equation}
where $\R_+=[0,+\infty)$, with the initial condition
\begin{equation}\label{c1}
u(0)=u_0\in H.
\end{equation}

\begin{definition}\label{def1}
An $H$-valued function $u=u(t)\in L^\infty_{loc}(\R_+,H)$ is called a generalized solution (g.s.) of problem (\ref{e1}), (\ref{c1}) if
$\forall f(t)\in C_0^1(\R_+,X_0)$, where the space $X_0$ equipped with the graph norm $\|u\|^2=\|u\|_H^2+\|A_0u\|_H^2$,
\begin{equation}\label{gs}
\int_{\R_\pm}(u(t),f'(t)+A_0f(t))dt+(u_0,f(0))=0.
\end{equation}
\end{definition}

\begin{remark}\label{rem1}

(1) It follows from relation (\ref{gs}) with $f(t)=v\varphi(t)$, $\varphi(t)\in C_0^1((0,+\infty))$, $v\in X_0$, that
$\frac{d}{dt} (u(t),v)=(u(t),Av)$ in $\D'((0,+\infty))$, where $\D'(I)$ denotes the space of distributions on an open set $I$. This relation readily implies weak continuity of $u(t)$ (after possible correction on a set of null measure).
Besides, (\ref{gs}) implies initial condition (\ref{c1}), understood in the sense of weak convergence $u(t)\rightharpoonup u_0$ as $t\to 0+$;

(2) Let $X=D(A)$ be the domain of the operator $A$, equipped with the graph norm $\|u\|^2=\|u\|_H^2+\|A(u)\|_H^2$, $X$ is a Banach space in view of the closedness of $A$. Since the operator $A$ is the closure of $A_0$, the space $X_0$ is dense both in $H$ and in $X$. In turn, this implies density of the space
$C_0^1(\R_+,X_0)$ in $C_0^1(\R_+,H)\cap C_0(\R_+,X)$. Therefore, every test function $f(t)\in C_0^1(\R_+,H)\cap C_0(\R_+,X)$ can be approximated by a sequence
$f_r(t)\in C_0^1(\R_+,X_0)$, $r\in\N$, so that $f_r(t)\to f(t)$, $f_r'(t)\to f'(t)$, $A_0f_r(t)\to Af(t)$ as $r\to\infty$ in $H$, uniformly with respect to $t$. Passing to the limit as $r\to\infty$ in relations (\ref{gs}) with the test functions $f=f_r$, we arrive at the relation
\[
\int_{\R_\pm}(u(t),f'(t)+Af(t))dt+(u_0,f(0))=0.
\]
Thus, (\ref{gs}) remains true for all $f(t)\in C_0^1(\R_+,H)\cap C_0(\R_+,X)$. Since $C_0^1(\R_+,H)\cap C_0(\R_+,X)\supset C_0^1(\R_+,X)$, then we may replace $A_0,X_0$ by $A,X$ in Definition~\ref{def1} and could initially assume that $A_0=A$ is a closed skew-symmetric operator.
\end{remark}

\section{One example}

Let $a(x)=(a_1(x),\ldots,a_n(x)) \in L^2_{loc}(\R^n,\R^n)$ be a solenoidal vector field on $\R^n$, that is,
\begin{equation}\label{sol}
\div a(x)=0
\end{equation}
in the sense of distributions on $\R^n$ (in $\D'(\R^n)$). We consider the transport equation
\[
u_t+a(x)\cdot\nabla_x u=u_t+\sum_{i=1}^n a_i(x)u_{x_i}=0
\]
(here $p\cdot q$ denotes the usual scalar multiplication of vectors $p,q\in\R^n$), $u=u(t,x)$, $x\in\R^n$, $t>0$.
By the solenoidality condition this equation can be written (at least formally) in the conservative form
\begin{equation}\label{tre}
u_t+\div_x (au)=0.
\end{equation}
This allows to introduce the notion of g.s. $u(t,x)$ to the Cauchy problem for equation
(\ref{tre}) with initial data
\begin{equation}\label{tri}
u(0,x)=u_0(x)\in L^2(\R^n).
\end{equation}

\begin{definition}\label{def2}
A function $u=u(t,x)\in L^\infty_{loc}(\R_+,L^2(\R^n))$ is called a g.s. of problem (\ref{tre}), (\ref{tri}) if for each
$f=f(t,x)\in C_0^1(\R_+\times\R^n)$
\begin{equation}\label{tr3}
\int_{\Pi} u[f_t+a\cdot\nabla_x f]dtdx+\int_{\R^n} u_0(x)f(0,x)dx=0.
\end{equation}
\end{definition}

We introduce the real Hilbert space $H=L^2(\R^n)$ and the unbounded linear operator $A_0$ in $H$ with the domain $D(A_0)=C_0^1(\R^n)\subset H$, defined by the equality $A_0u=a(x)\cdot\nabla u(x)\in H$. This operator is skew-symmetric.
In fact, for $u,v\in C_0^1(\R^n)$
\begin{align*}(A_0u,v)_H+(u,A_0v)_H=\int_{\R^n} a(x)\cdot(v(x)\nabla u(x)+u(x)\nabla v(x))dx= \\ \int_{\R^n} a(x)\cdot\nabla (u(x)v(x))dx=0
\end{align*}
by the solenoidality requirement (\ref{sol}). As is easy to see, the adjoint operator $(A_0)^*$ is determined by the relation $(A_0)^*u=-\div (a(x)u)$ in $\D'(\R^n)$. The domain of this operator consists of such $u=u(x)\in H$ that the distribution $-\div (au)\in H$. Analyzing Definitions~\ref{def1},~\ref{def2}, we conclude that the notion of g.s. of the transport problem (\ref{tre}), (\ref{tri}) is compatible with the theory of g.s. to the abstract problem  (\ref{e1}), (\ref{c1}).

\section{Contractive semigroups of generalized solutions}
We consider a $C_0$-semigroup $T_t=e^{tB}$ of bounded linear operators in $H$ with an infinitesimal generator $B$. The operator $B$ is known to be a densely defined closed operator, and its domain $D(B)$ is a Banach space with respect to the graph norm. The following statement holds.

\begin{theorem}\label{th1}
The functions $u(t)=e^{tB}u_0$ are g.s. to the problem (\ref{e1}), (\ref{c1}) for each $u_0\in H$ if and only if
$B\subset A^*$.
\end{theorem}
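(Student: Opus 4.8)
The plan is to prove both implications by testing the weak formulation (\ref{gs}) against suitable test functions and exploiting the $C^1$-regularity in $t$ of the orbit $t\mapsto e^{tB}u_0$ on the dense domain $D(B)$, combined with integration by parts in $t$. Throughout I would work with the equivalent formulation from Remark~\ref{rem1}(2), in which $A_0,X_0$ are replaced by the closed operator $A$ and its domain $X=D(A)$; this is what lets me invoke the adjoint pairing $(A^*w,v)=(w,Av)$, which is legitimate precisely when $w\in D(A^*)$ and $v\in X$.

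For the ``if'' direction, suppose $B\subset A^*$. I would first treat $u_0\in D(B)$, for which $u(t)=e^{tB}u_0$ is continuously differentiable with $u(t)\in D(B)$ and $u'(t)=Bu(t)=A^*u(t)$. Given $f\in C_0^1(\R_+,X_0)$, I integrate $\int_0^\infty(u,f')\,dt$ by parts in $t$: the boundary term at infinity vanishes by compact support, while the one at $0$ yields $-(u_0,f(0))$. Using the adjoint relation to rewrite $(u',f)=(A^*u,f)=(u,Af)=(u,A_0f)$ then reproduces exactly (\ref{gs}). For arbitrary $u_0\in H$ I would approximate by $u_0^k\in D(B)$ with $u_0^k\to u_0$ and pass to the limit: on the compact support $[0,T]$ of $f$ one has the uniform bound $\|e^{tB}(u_0^k-u_0)\|_H\le M_T\|u_0^k-u_0\|_H\to 0$, so every term of (\ref{gs}) converges and the identity persists.

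For the ``only if'' direction, assume every $u(t)=e^{tB}u_0$ is a g.s. Fix $u_0\in D(B)$ and $v\in X_0$ and test (\ref{gs}) with $f(t)=\varphi(t)v$, $\varphi\in C_0^1(\R_+)$. Since $u_0\in D(B)$, the scalar function $t\mapsto(u(t),v)$ is $C^1$ with derivative $(Bu(t),v)$, so integrating the $\varphi'$-term by parts makes the two boundary contributions $\pm\varphi(0)(u_0,v)$ cancel and leaves $\int_0^\infty\varphi(t)[(u(t),A_0v)-(Bu(t),v)]\,dt=0$. As $\varphi$ is arbitrary and the integrand is continuous in $t$, it must vanish identically; evaluating at $t=0$ gives $(u_0,A_0v)=(Bu_0,v)$. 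Since $A$ is the closure of $A_0$, any $v\in X$ is a limit of $v_k\in X_0$ with $A_0v_k\to Av$, which upgrades the identity to $(u_0,Av)=(Bu_0,v)$ for all $v\in X$. By the definition of the adjoint this means $u_0\in D(A^*)$ with $A^*u_0=Bu_0$, hence $B\subset A^*$.

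The arguments are largely routine, and the step demanding the most care is the use of the adjoint pairing: I must keep track of which factor lies in $D(A)$ and which in $D(A^*)$, so that $(A^*u,f)=(u,Af)$ is genuinely justified. In the ``if'' direction this is guaranteed by the hypothesis $B\subset A^*$ together with the reduction of Remark~\ref{rem1}(2), whereas in the converse it is exactly the membership $u_0\in D(A^*)$ that is being extracted and must therefore be deduced rather than assumed. The remaining limiting and density passages are the other point to watch, but they are controlled by the local boundedness of the $C_0$-semigroup and the closedness of $A$.
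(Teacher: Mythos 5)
Your proposal is correct and follows essentially the same route as the paper: the forward direction via $u'(t)=Bu(t)=A^*u(t)$ on $D(B)$, the adjoint pairing, integration by parts in $t$, and a density/limit argument for general $u_0$; the converse by testing with $f=\varphi(t)v$ and reading off $(u_0,A_0v)=(Bu_0,v)$ at $t=0$. The only (harmless) difference is your extra upgrade from $v\in X_0$ to $v\in X$, which is not needed since $A^*=(A_0)^*$ already gives $u_0\in D(A^*)$ from the identity on $X_0$.
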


\begin{proof}
First assume that $B\subset A^*$ and $u_0\in D(B)$. Then $u(t)=e^{tB}u_0\in C(\R_+,D(B))\cap C^1(\R_+,H)$ and $u'(t)=T_tBu_0=BT_tu_0=Bu(t)$. Therefore, for all $f(t)\in C_0^1(\R_+,X_0)$ there exists the continuous derivative
\begin{equation}\label{sk1}
\frac{d}{dt}(u(t),f(t))=(u'(t),f(t))+(u(t),f'(t))=(Bu(t),f(t))+(u(t),f'(t)).
\end{equation}
By our assumption $B\subset A^*$ and therefore.
\[
(Bu(t),f(t))=(A^*u(t),f(t))=(u(t),Af(t))=(u(t),A_0f(t)).
\]
In view of (\ref{sk1}), we arrive at the equality
\[
\frac{d}{dt}(u(t),f(t))=(u(t),A_0f(t)+f'(t)),
\]
which implies, after the integration, the relation
\[
\int_{\R_+}(u(t),f'(t)+A_0f(t))dt=\int_{\R_+}\frac{d}{dt}(u(t),f(t))dt=-(u_0,f(0)).
\]
Hence, the identity (\ref{gs}) holds and $u(t)$ is a g.s. of (\ref{e1}), (\ref{c1}).
In the case of arbitrary $u_0\in H$ we choose a sequence $u_{0k}\in D(B)$, convergent as $k\to\infty$ to $u_0$ in $H$. As we have already established, the function $u_k(t)=T_tu_{0k}$ is a g.s. of the problem (\ref{e1}), (\ref{c1}) with initial data $u_{0k}$ for each $k\in\N$. It is known that for a $C_0$-semigroup $T_t$ the operator norms $\|T_t\|$ are exponentially bounded, $\|T_t\|\le Ce^{\alpha t}$ for some constants $C>0$, $\alpha\in\R$. Therefore,
\[
\|u_k(t)-u(t)\|_H\le\|T_t\|\|u_{0k}-u_0\|_H\mathop{\to}_{k\to\infty} 0
\]
uniformly on any segment $[0,T]$. This allows to pass to the limit as $k\to\infty$ in the relations (\ref{gs})
\[
\int_{\R_+}(u_k(t),f'(t)+A_0f(t))dt+(u_{0k},f(0))=0,
\]
corresponding to g.s. $u_k$ and derive that for each test function $f(t)\in C_0^1(\R_+,X_0)$
\[
\int_{\R_+}(u(t),f'(t)+A_0f(t))dt+(u_0,f(0))=0.
\]
Hence, $u(t)$ is a g.s. of the problem (\ref{e1}), (\ref{c1}), as was to be proved.

Conversely, assume that the functions $u(t)=T_tu_0$ are g.s. of (\ref{e1}), (\ref{c1}) for all $u_0\in H$. If $u_0\in D(B)$, then $u(t)=T_tu_0\in C^1(\R_+,H)$ and $u'(0)=Bu_0$. Therefore, for all $v\in D(A_0)$
the scalar function  $I(t)=(u(t),v)$ lies in $C^1(\R_+)$ and $I'(0+)=(Bu_0,v)$.
On the other hand, for all $h(t)\in C_0^1(\R_+)$
\[
\int_{\R_+} I(t)h'(t)dt=\int_{\R_+} (u(t),vh'(t))dt=
-h(0)(u_0,v)-\int_{\R_+} (u(t),A_0v)h(t)dt
\]
by the equality (\ref{gs}) with $f=h(t)v$. Integration by parts then implies that
\[
\int_{\R_+} I'(t)h(t)dt=\int_{\R_+} (u(t),A_0v)h(t)dt
\]
and by arbitrariness of $h(t)\in C_0^1(\R_+)$ we find $I'(t)=(u(t),A_0v)$. In particular,
$(u_0,A_0v)=I'(0+)=(Bu_0,v)$. Since this identity holds for all $v\in D(A_0)$, then by definition of the adjoint operator $A^*=(A_0)^*$ we find that $u_0\in D(A^*)$ and $A^*u_0=Bu_0$. Hence,
$B\subset A^*$. The proof is complete.
\end{proof}

By the Lumer-Phillips theorem (see, for instance, \cite[Theorem~1.1.3]{Phil}) a $C_0$-semigroup $T_t=e^{tB}$ is contractive (i.e., $\|T_t\|\le 1$ $\forall t>0$) if and only if its generator $B$ is an $m$-dissipative operator. Remind that an operator $A$ in a Banach space $H$ is called dissipative if $\|u-hAu\|\ge \|u\|$ $\forall u\in H$, $h>0$. A dissipative operator $A$ is called $m$-dissipative if $\Im (E-hA)=X$ for all $h>0$ (it is sufficient that this property is satisfied only for one value $h>0$). It is known that an $m$-dissipative operator is a maximal dissipative operator, in the case of a Hilbert space $H$ the inverse statement is also true. Notice also that in the case of Hilbert space $H$ dissipativity of an operator $A$ reduces to the condition  $(Au,u)\le 0$ $\forall u\in H$. In particular, skew-symmetric operators are dissipative. The following property means that the requirement $B\subset A^*$ of Theorem~\ref{th1} for an
$m$-dissipative operator is equivalent to the condition that $B$ is an extension of the operator $-A$.

\begin{lemma}\label{lem1}
Let $B$ be an $m$-dissipative operator in $H$. Then $B\subset A^*\Leftrightarrow -A\subset B$.
\end{lemma}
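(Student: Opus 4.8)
The plan is to prove the two implications separately, noticing that they require different parts of the hypothesis on $B$: the implication $-A\subset B\Rightarrow B\subset A^*$ needs only dissipativity, whereas $B\subset A^*\Rightarrow -A\subset B$ needs the full $m$-dissipativity (surjectivity of $E-B$). Throughout I would use that $A$ is skew-symmetric, so $-A\subset A^*$ and $(Au,u)=0$ for $u\in D(A)$, and that $H$ is real, so the inner product is symmetric.

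For $-A\subset B\Rightarrow B\subset A^*$ I would show that dissipativity alone forces a ``cross skew-symmetry'' between $D(A)$ and $D(B)$. Fix $x\in D(A)\subset D(B)$ and $w\in D(B)$; since $Bx=-Ax$ we have $(Bx,x)=-(Ax,x)=0$, and expanding the dissipativity inequality $(B(x+\lambda w),x+\lambda w)\le 0$ for arbitrary real $\lambda$ gives
\[
\lambda\big[(Bx,w)+(x,Bw)\big]+\lambda^2(Bw,w)\le 0 .
\]
As this holds for $\lambda$ of either sign and arbitrarily small modulus, the bracketed linear coefficient must vanish, so $(Bx,w)+(x,Bw)=0$. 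Hence $(Ax,w)=-(Bx,w)=(x,Bw)$ for every $x\in D(A)$, which shows that $x\mapsto (Ax,w)$ is bounded; by the definition of the adjoint this means $w\in D(A^*)$ and $A^*w=Bw$, i.e. $B\subset A^*$.

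For the converse $B\subset A^*\Rightarrow -A\subset B$ I would exploit surjectivity. Given $u\in D(A)$, set $v=u+Au=(E-A^*)u$ (using $A^*u=-Au$). Since $B$ is $m$-dissipative, $E-B$ is onto, so there is $w\in D(B)$ with $(E-B)w=v$; because $B\subset A^*$ this reads $(E-A^*)w=(E-A^*)u$, so that $z:=w-u\in D(A^*)$ satisfies $A^*z=z$. The key computation is then to evaluate $(Bw,w)$ via $Bw=A^*w=-Au+z$: the cross terms cancel, since $(-Au,z)+(z,u)=-(u,A^*z)+(u,z)=-(u,z)+(u,z)=0$, while $(-Au,u)=0$ by skew-symmetry, leaving $(Bw,w)=\|z\|^2$. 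Dissipativity gives $(Bw,w)\le 0$, forcing $z=0$, whence $w=u\in D(B)$ and $Bu=A^*u=-Au$; thus $-A\subset B$.

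The main obstacle is this second direction. One has to realize that the range condition must be used to \emph{produce} a candidate $w$, and then that the a priori uncontrolled error $z=w-u$, which merely lies in $\ker(E-A^*)$, is annihilated by pairing against dissipativity. The delicate point is the exact cancellation yielding $(Bw,w)=\|z\|^2$: it relies simultaneously on $z\in D(A^*)$ with $A^*z=z$, on the skew-symmetry of $A$, and on the symmetry of the real inner product, so any sign slip would destroy the identity. (In a complex Hilbert space one would have to pass to real parts, which is why the real structure is genuinely used here.)
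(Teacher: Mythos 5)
Your proof is correct. The direction $-A\subset B\Rightarrow B\subset A^*$ is essentially identical to the paper's: both expand the dissipativity inequality for $u+sv$ into a quadratic in $s$ with vanishing constant term and conclude that the linear coefficient $(Bu,v)+(Bv,u)$ must be zero. The converse direction, however, is genuinely different. The paper builds the common extension $\tilde B$ of $-A$ and $B$ on $D(A)+D(B)$, verifies (using $B\subset A^*$ and $(Au,u)=0$) that $\tilde B$ is dissipative, and then invokes the fact that in a Hilbert space an $m$-dissipative operator is a \emph{maximal} dissipative operator to force $\tilde B=B$, hence $D(A)\subset D(B)$. You instead use the range condition $\Im(E-B)=H$ directly: you solve $(E-B)w=(E-A^*)u$, note that the discrepancy $z=w-u$ lies in $\ker(E-A^*)$, and eliminate it via the exact cancellation $(Bw,w)=\|z\|^2\le 0$ (which I have checked: $(Au,z)=(u,A^*z)=(u,z)$ kills the cross terms, and $(Au,u)=0$ kills the diagonal one). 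Your route works straight from the definition of $m$-dissipativity and makes the role of the surjectivity of $E-B$ completely explicit, at the price of a slightly delicate pointwise computation; the paper's route is a softer maximality argument, but it leans on the Hilbert-space-specific equivalence between $m$-dissipative and maximal dissipative operators. Your remark on which half of the hypothesis each implication consumes also matches the paper, which notes that the implication $-A\subset B\Rightarrow B\subset A^*$ (needing only dissipativity) goes back to Phillips.
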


\begin{proof}
Assume that $B\subset A^*$. Since we have also that $-A\subset A^*$, then $-Au=Bu=A^*u$ on $D(A)\cap D(B)$. This allows to construct a common extension $\tilde B$ of both operators $-A$ and $B$, which is defined on the space $D(A)+D(B)$ by the equality
\[
\tilde Bu=-Au_1+Bu_2, \quad u=u_1+u_2, \ u_1\in D(A), u_2\in D(B).
\]
Since the operators $-A$ and $B$ coincide on their common domain $D(A)\cap D(B)$, this definition is correct and does not depend on the representation $u=u_1+u_2$. By the construction we have $-A\subset \tilde B$, $B\subset \tilde B$. Let us show that the operator $\tilde B$ is dissipative. For arbitrary $u\in D(A)+D(B)$ we find such $u_1\in D(A)$, $u_2\in D(B)$ that $u=u_1+u_2$. Then
\begin{align}\label{sk2}
(\tilde Bu,u)=(-Au_1+Bu_2,u_1+u_2)=\nonumber\\-(Au_1,u_1)+(Bu_2,u_2)+((u_1,Bu_2)-(Au_1,u_2))=\nonumber\\
(Bu_2,u_2)+((u_1,A^*u_2)-(Au_1,u_2))=(Bu_2,u_2)\le 0
\end{align}
by dissipativity of $B$. We also take into account that $(Au_1,u_1)=0$ in view of skew-symmetricity of the operator $A$ and that $B\subset A^*$. It follows from (\ref{sk2}) that the operator $\tilde B$ is dissipative. But the operator $B$ is a maximal dissipative operator, and we conclude that $\tilde B=B$. In particular $D(A)+D(B)=D(B)$, that is,
$D(A)\subset D(B)$. Hence, $-A\subset B$.

Conversely, assume that $-A\subset B$. Let $u\in D(A)$, $v\in D(B)$. Then $u+sv\in D(B)$ for all $s\in\R$ and by dissipativity of $B$ the inequality $f(s)\doteq (B(u+sv),u+sv)\le 0$ holds for all real $s$. The function $f(s)$ is quadratic and can be written in the form
\begin{align}\label{sk3}
f(s)=(Bu,u)+s((Bu,v)+(Bv,u))+s^2(Bv,v)=\nonumber\\ s((Bu,v)+(Bv,u))+s^2(Bv,v),
\end{align}
because by the assumption $-A\subset B$ we have the equality $(Bu,u)=-(Au,u)=0$. Notice also that $(Bv,v)\le 0$. From representation (\ref{sk3}) and the condition $f(s)\le 0$ $\forall s\in\R$ it follows that $(Bu,v)+(Bv,u)=0$. Hence, for all $v\in D(B)$, $u\in D(A)$  \[(Au,v)=-(Bu,v)=(Bv,u)=(u,Bv).\]
This identity implies that $v\in D(A^*)$ and $A^*v=Bv$ for all $v\in D(B)$. This means that $B\subset A^*$. The proof is complete.
\end{proof}
We underline that the implication $-A\subset B \Rightarrow B\subset A^*$ was also established by R.S.~Phillips  in \cite[Lemma~1.1.5]{Phil}.

\section{Main results}

By Theorem~\ref{th1} and Lemma~\ref{lem1} generators of contractive semigroups of g.s. of equation (\ref{e1}) are exactly  $m$-dissipative extensions of the operator $-A$. First, we describe skew-symmetric extensions of the operator $A$. Such extensions reduce to symmetric extensions of the symmetric operator $iA$ ($i^2=-1$). Theory of symmetric extensions is well-developed and based on the Cayley transform. In the framework of skew-symmetric operators the Cayley transform $Q=(E+A)(E-A)^{-1}$ is even more natural (it does not require complexification of the space and the operators). Since
\[
\|u\pm Au\|^2=\|u\|^2\pm 2(Au,u)+\|Au\|^2=\|u\|^2+\|Au\|^2, \quad \forall u\in D(A),
\]
the operator $Q$ is an isometry between the spaces $H_-=\Im (E-A)$ and $H_+=\Im (E+A)$ (notice that the spaces $H_\pm$ are closed due to the closedness of operator $A$). The Cayley transform is invertible, the inverse transform is defined by the formula $A=(Q-E)(Q+E)^{-1}$ (under the assumption that the domain
$D(A)=\Im(Q+E)$ is dense the operator $Q+E$ is always invertible). Hence, skew-symmetric extensions of the operator $A$ corresponds to isometric extensions of the operator $Q$, which are reduced to construction of isometric maps from $(H_-)^\perp$ into $(H_+)^\perp$. Remind that Hilbert dimensions of these spaces (i.e., Hilbert codimensions of $H_\pm$) are called the deficiency indexes of $A$. We denote them $d_+$, $d_-$, respectively, so that $d_\pm=d_\pm(A)=\codim H_\pm$. Maximal skew-symmetric operators $A$ are characterized by the condition that at least one of the deficiency indexes $d_\pm(A)$ is null. The condition $d_+=d_-=0$ describes skew-adjoint operators, their Cayley transforms $Q$ are orthogonal operators. It is known that any skew-symmetric operator can be extended to a maximal skew-symmetric operator. Using this property, we can now prove the following result.

\begin{theorem}\label{th2}
There exists a contractive semigroup $u(t)=e^{tB}u_0$ of g.s. to equation (\ref{e1}). Moreover, the following two cases are possible:

(i) The generator $B$ is skew-symmetric. In this case the operators $T_t=e^{tB}$ are isometric embeddings and the conservation of energy $\|u(t)\|=\|u_0\|$ holds for all $t>0$;

(ii) the adjoint operator $B^*$ is skew-symmetric. Then the operators $(T_t)^*$ are isometric embeddings.
\end{theorem}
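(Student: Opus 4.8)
The plan is to invoke the characterization furnished by Theorem~\ref{th1} and Lemma~\ref{lem1}: generators of contractive semigroups of g.s.\ of (\ref{e1}) are exactly the $m$-dissipative extensions of $-A$, so it suffices to exhibit one such extension $B$ and read off its structure. I would first record the two facts that drive the dichotomy. A skew-symmetric operator is dissipative since $(Bu,u)=0$, and such a $B$ is $m$-dissipative precisely when $\Im(E-B)=H$, i.e.\ when $d_-(B)=\codim\Im(E-B)=0$. Using the Cayley-transform description recalled above, extending $-A$ (resp.\ $A$) to a maximal skew-symmetric operator amounts to choosing a maximal isometry from $(H_-)^\perp$ into $(H_+)^\perp$, and such an extension can be arranged to have $d_-=0$ exactly when $d_-\le d_+$ for the operator being extended. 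Since $-A$ has deficiency indices $(d_-(-A),d_+(-A))=(d_+(A),d_-(A))$, one of the two inequalities $d_+(A)\le d_-(A)$ and $d_-(A)\le d_+(A)$ always holds, and this both guarantees existence and produces the two cases.

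In the case $d_+(A)\le d_-(A)$ I would extend the skew-symmetric operator $-A$ to a maximal skew-symmetric operator $B$ with $d_-(B)=0$; this is possible since for $-A$ the relevant inequality $d_-(-A)=d_+(A)\le d_-(A)=d_+(-A)$ holds. Then $\Im(E-B)=H$ together with dissipativity shows $B$ is $m$-dissipative, and $-A\subset B$, so $T_t=e^{tB}$ is a contractive semigroup of g.s.\ with skew-symmetric generator, which is case (i). For $u_0\in D(B)$ the function $u(t)=T_tu_0$ lies in $C^1(\R_+,H)$ and $\frac{d}{dt}\|u(t)\|^2=2(Bu(t),u(t))=0$, so $\|u(t)\|=\|u_0\|$; by density of $D(B)$ and $\|T_t\|\le 1$ this conservation of energy, hence the isometry of $T_t$, extends to all of $H$.

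In the case $d_-(A)\le d_+(A)$ I would instead extend $A$ itself to a maximal skew-symmetric operator $C$ with $d_-(C)=0$, so that $C$ is $m$-dissipative, and set $B=C^*$. Three points must be checked. First, $B$ extends $-A$: skew-symmetricity of $C$ gives $-C\subset C^*=B$, whence $-A\subset -C\subset B$. Second, $B$ is $m$-dissipative: from $\|(E-hC)^{-1}\|\le 1$ on all of $H$ one gets $(E-hC^*)^{-1}=((E-hC)^{-1})^*$ bounded by $1$ and defined on all of $H$, which yields both $\|v-hC^*v\|\ge\|v\|$ and $\Im(E-hC^*)=H$. Third, $B^*=C^{**}=C$ is skew-symmetric, which is case (ii); the adjoint semigroup $(T_t)^*=e^{tC}$ then consists of isometric embeddings by the energy computation of the previous paragraph applied to the skew-symmetric $m$-dissipative generator $C$.

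The routine verifications (existence of a maximal skew-symmetric extension with a prescribed vanishing deficiency index, and the resolvent-adjoint identity $(E-hC^*)^{-1}=((E-hC)^{-1})^*$) are standard. The main point to get right is the case $d_-(A)\le d_+(A)$: the correct generator is not a skew-symmetric extension of $-A$ at all, since no such extension is $m$-dissipative here, but rather the adjoint $C^*$ of an $m$-dissipative skew-symmetric extension of $A$, and the crux is to recognize that $C^*$ is simultaneously an $m$-dissipative extension of $-A$ (so that Theorem~\ref{th1} applies) and an operator whose adjoint is skew-symmetric.
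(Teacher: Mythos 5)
Your proposal is correct and follows essentially the same route as the paper: extend to a maximal skew-symmetric operator via the Cayley-transform/deficiency-index picture, take $B=-\tilde A$ when the extension is $m$-dissipative as an extension of $-A$ (case (i)), and take $B=(\tilde A)^*$ for an $m$-dissipative skew-symmetric extension $\tilde A$ of $A$ otherwise (case (ii)). The only differences are presentational — you organize the dichotomy by comparing $d_+(A)$ and $d_-(A)$ rather than by picking an arbitrary maximal extension, and you write out the energy identity and the $m$-dissipativity of $C^*$, which the paper asserts or cites from Phillips.
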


\begin{proof}
The skew-symmetric operator $A$ admits a maximal skew-symmetric extension $\tilde A$. Let $d_\pm=\codim (E\pm\tilde A)$ be the deficiency indexes of $\tilde A$. By the maximality, one of these indexes is null. If $d_+=0$, then $\Im(E+\tilde A)=H$ and therefore the operator $-\tilde A$ is $m$-dissipative. In this case we set $B=-\tilde A$. Similarly, if $d_-=0$ then the operator $\tilde A$ is $m$-dissipative. It is known (cf. \cite[Theorem~1.1.2]{Phil}) that the adjoint operator $(\tilde A)^*$ is $m$-dissipative as well, and $-\tilde A\subset(\tilde A)^*$ by skew-symmetricity of $\tilde A$. Setting $B=(\tilde A)^*$,
we obtain a $m$-dissipative extension of $-A$ in the case $d_-=0$. By Theorem~\ref{th1} the corresponding semigroup $e^{tB}$ provides g.s. of equation (\ref{e1}). Properties (i), (ii) follows from the fact that the semigroup $e^{tB}$ consists of isometric embeddings whenever the generator $B$ is skew-symmetric. We also used that $B^*$ is a generator of the adjoint semigroup $(T_t)^*$.
\end{proof}
We underline that in paper \cite{Phil} existence of an $m$-dissipative extensions was established by different methods,
with the help of the Cayley transform for dissipative operators.

Concerning uniqueness of the constructed in Theorem~\ref{th2} semigroup, the following statement holds.

\begin{theorem}\label{th3}
A contractive semigroup of g.s. to equation (\ref{e1}) is unique if and only if the skew-symmetric operator $A$ is maximal.
\end{theorem}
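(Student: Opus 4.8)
The plan is to translate the uniqueness question into a statement about $m$-dissipative operators. By Theorem~\ref{th1} together with Lemma~\ref{lem1} and the Lumer--Phillips theorem, the generators $B$ of contractive semigroups of g.s. to (\ref{e1}) are exactly the $m$-dissipative extensions of $-A$ (equivalently, the $m$-dissipative operators $B$ with $-A\subset B\subset A^*$). Since a $C_0$-semigroup is uniquely determined by its generator and conversely, uniqueness of the semigroup is equivalent to uniqueness of such an $m$-dissipative extension. So I would reduce the theorem to the assertion that $-A$ admits a unique $m$-dissipative extension if and only if $A$ is a maximal skew-symmetric operator.

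For the direction ``$A$ maximal $\Rightarrow$ uniqueness'', I would use that maximality means $d_+(A)=0$ or $d_-(A)=0$. If $d_+=0$, then $\Im(E+A)=H$, so $-A$ is itself $m$-dissipative, hence maximal dissipative; any dissipative extension coincides with it, so $B=-A$ is the only $m$-dissipative extension. If $d_-=0$, then $\Im(E-A)=H$, so $A$ is $m$-dissipative and, as recalled in the proof of Theorem~\ref{th2} via \cite[Theorem~1.1.2]{Phil}, $A^*$ is $m$-dissipative with $-A\subset A^*$; any $m$-dissipative extension $B$ of $-A$ satisfies $B\subset A^*$ by Lemma~\ref{lem1}, and since $B$ is maximal dissipative while $A^*$ is a dissipative extension of it, $B=A^*$. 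In both cases the extension is unique.

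For the converse I would argue by contraposition: if $A$ is not maximal then $d_+(A)>0$ and $d_-(A)>0$, and I will produce two distinct $m$-dissipative extensions. Here I rely on the Cayley transform $Q=(E+A)(E-A)^{-1}$, which yields a bijection between skew-symmetric extensions of $A$ and isometric extensions of $Q$, the latter given by isometries of subspaces of $(H_-)^\perp$ into $(H_+)^\perp$. Assuming without loss of generality that $d_+\le d_-$ (the opposite case being symmetric, using $B=\tilde A^*$ in place of $B=-\tilde A$), I would pick two \emph{distinct} isometries $W_1\neq W_2$ of $d_+$-dimensional subspaces of $(H_-)^\perp$ onto $(H_+)^\perp$; such a pair exists precisely because $d_+\ge1$ (when $d_+=d_-=1$ the two sign choices $We=\pm e'$ already suffice, and otherwise there are infinitely many). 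The resulting distinct isometric extensions $Q\oplus W_1\neq Q\oplus W_2$ make the deficiency index $d_+$ vanish and hence, by the inverse Cayley transform, yield two distinct maximal skew-symmetric extensions $\tilde A_1\neq\tilde A_2$ of $A$ with $d_+(\tilde A_i)=0$; consequently $B_i=-\tilde A_i$ are two distinct $m$-dissipative extensions of $-A$, giving two distinct contractive semigroups of g.s.

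I expect the main obstacle to be the converse direction: one must check that the two constructed operators are genuinely distinct $m$-dissipative extensions (not merely distinct skew-symmetric extensions that collapse to the same generator), and that, through the Cayley correspondence, distinct isometries really do produce distinct $\tilde A_i$ and hence distinct semigroups. Building both extensions of the \emph{same} type $B=-\tilde A$ keeps this bookkeeping transparent, since then $B_1\neq B_2$ follows at once from $\tilde A_1\neq\tilde A_2$. The forward direction is comparatively short, its only delicate point being the correct use of maximality of $m$-dissipative (equivalently, maximal dissipative) operators in a Hilbert space.
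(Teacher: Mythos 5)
Your proposal is correct and follows essentially the same route as the paper: reduce via Theorem~\ref{th1}, Lemma~\ref{lem1} and Lumer--Phillips to uniqueness of the $m$-dissipative extension of $-A$, use maximality of dissipative operators to force $B=-A$ or $B=A^*$ in the forward direction, and produce two distinct maximal skew-symmetric extensions in the converse. The only difference is that you spell out, via the Cayley transform, the construction of the two distinct extensions where the paper simply says ``evidently''; that is a welcome addition rather than a deviation.
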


\begin{proof}
Let the skew-symmetric operator $A$ be maximal and $d_\pm$ be its deficiency indexes. If $u(t)=e^{tB}u_0$ is a contracrive semigroup of g.s. then the infinitesimal generator $B$ is $m$-dissipative and, by Theorem~\ref{th1} and Lemma~\ref{lem1},
$-A\subset B\subset A^*$. By the maximality, one of the operators $-A$, $A^*$ is $m$-dissipative, $-A$ in the case
$d_+=0$ and $A^*$ if $d_-=0$. Since $B$ is an $m$-dissipative operator as-well, we conclude that necessarily $B=-A$ if $d_+=0$ and $B=A^*$ if $d_-=0$ (in the case $d_+=d_-=0$ we have $B=-A=A^*$). Thus, the operator $B$ is uniquely defined, which implies uniqueness of the corresponding semigroup.

Conversely, if the operator $A$ is not maximal then both its deficiency indexes are not zero. Evidently, then there exist different maximal skew-symmetric extensions $\tilde A_1$, $\tilde A_2$ of $A$ with the same deficiency indexes $d_\pm$. The corresponding $m$-dissipative operators
\[
B_1=\left\{\begin{array}{lcr} -\tilde A_1 & , & d_+=0, \\ (\tilde A_1)^* & , & d_-=0, \end{array}\right. \quad B_2=\left\{\begin{array}{lcr} -\tilde A_2 & , & d_+=0, \\ (\tilde A_2)^* & , & d_-=0 \end{array}\right.
\]
are different and therefore generate different contractive semigroups of g.s.
\end{proof}

Now we are ready to formulate a criteria of uniqueness of g.s. to problem (\ref{e1}), (\ref{c1}).

\begin{theorem}\label{th4} A g.s. of problem (\ref{e1}), (\ref{c1}) is unique if and only if the deficiency index $d_-$ of the operator $A$ is zero.
\end{theorem}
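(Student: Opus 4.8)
The plan is to reduce everything to the homogeneous case. By linearity of the defining identity (\ref{gs}) in the pair $(u,u_0)$, a g.s. of (\ref{e1}), (\ref{c1}) is unique for every $u_0\in H$ if and only if the only g.s. corresponding to $u_0=0$ is $u\equiv 0$. So I would analyze g.s. with zero initial data and show that they are forced to vanish exactly when $d_-=0$.

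For sufficiency I would argue as follows. When $d_-=0$ we have $\Im(E-A)=H$, so the skew-symmetric operator $A$ is itself $m$-dissipative and generates a contraction semigroup $e^{tA}$. Fix $t_0>0$ and $w\in D(A)$ and set $g(t)=e^{(t_0-t)A}w$ for $t\in[0,t_0]$; then $g\in C^1([0,t_0],H)\cap C([0,t_0],X)$ and $g'+Ag=0$. Multiplying by a cutoff $\psi_\delta\in C_0^1(\R_+)$ that equals $1$ on $[0,t_0-\delta]$ and vanishes on $[t_0,\infty)$ produces an admissible test function $f=\psi_\delta g$ (in the sense of Remark~\ref{rem1}(2)), for which $f'+Af=\psi_\delta' g$. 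Substituting into (\ref{gs}) with $u_0=0$ yields $\int_{\R_+}\psi_\delta'(t)(u(t),g(t))\,dt=0$; letting $\delta\to 0$ and using weak continuity of $u$ together with continuity of $g$, the measures $\psi_\delta'\,dt$ concentrate to $-\delta_{t_0}$ and give $(u(t_0),w)=0$. Since $w\in D(A)$ is arbitrary and $D(A)$ is dense, $u(t_0)=0$, and as $t_0>0$ is arbitrary, $u\equiv 0$.

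For necessity I would exhibit two distinct g.s. with the same initial data when $d_-\neq 0$. Because $H_-=\Im(E-A)$ is closed and $(E-A)^*=E-A^*$, one has $\ker(E-A^*)=H_-^\perp$, a space of dimension $d_-$. If $d_-\neq 0$, choose $\phi\neq 0$ with $A^*\phi=\phi$. A short computation shows $u_1(t)=e^t\phi$ is a g.s. with data $\phi$: using $(\phi,A_0f)=(A^*\phi,f)=(\phi,f)$, the integrand $(e^t\phi,f'+A_0f)$ becomes the total derivative $\frac{d}{dt}[e^t(\phi,f(t))]$, whose integral over $\R_+$ equals $-(\phi,f(0))$, so (\ref{gs}) holds. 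On the other hand, the semigroup g.s. $u_2(t)=e^{tB}\phi$ from Theorem~\ref{th2} is contractive, $\|u_2(t)\|\le\|\phi\|$, whereas $\|u_1(t)\|=e^t\|\phi\|\to\infty$. Hence $u_1\neq u_2$ are two distinct g.s. with initial data $\phi$, and uniqueness fails.

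The hard part will be the sufficiency direction, specifically the construction of the backward-in-time test function $g(t)=e^{(t_0-t)A}w$: this is available precisely because $d_-=0$ renders $A$ $m$-dissipative, so the semigroup $e^{sA}$ exists for $s\ge 0$. The remaining care lies in justifying the limit $\delta\to 0$ through the weak continuity of $u$ furnished by Remark~\ref{rem1}(1) and in verifying that $f=\psi_\delta g$ is admissible via Remark~\ref{rem1}(2); these are the technical points that must be handled cleanly.
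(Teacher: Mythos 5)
Your proposal is correct and follows essentially the same route as the paper: for necessity, the eigenvector $\phi$ of $A^*$ with eigenvalue $1$ yielding the unbounded g.s. $e^t\phi$ alongside the bounded semigroup solution; for sufficiency, testing against $f=\psi_\delta\, e^{(t_0-t)A}w$ (the paper's $\theta_\nu$ plays the role of your cutoff $\psi_\delta$) and concentrating the cutoff's derivative at $t_0$. The only difference is cosmetic: the paper additionally splices $e^t\phi$ with the semigroup at time $t_0$ to produce a family of \emph{bounded} pairwise distinct g.s., strengthening the non-uniqueness conclusion, but this is not needed for the statement as given.
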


\begin{proof}
Assume that a g.s. of problem (\ref{e1}), (\ref{c1}) is unique. We have to prove that $d_-=0$. Supposing the contrary,  $d_->0$, we conclude that the closed subspace $\Im (E-A)$ is strictly contained in $H$. Then $\ker(E-A^*)=(\Im(E-A))^\perp\not=\{0\}$ and there exists a nonzero vector $u_0\in D(A^*)$ such that $A^*u_0=u_0$. The function $u=e^t u_0$ is a g.s. of (\ref{e1}), (\ref{c1}) different from the semigroup g.s. $T_tu_0$ (constructed in Theorem~\ref{th2}), because the latter is bounded. Moreover, for each $t_0\ge 0$ the functions
\[
\tilde u(t)=\left\{\begin{array}{lcr} e^tu_0, & , & 0\le t\le t_0, \\ e^{t_0}T_{t-t_0}u_0 & , & t\ge t_0 \end{array} \right.
\]
are pairwise different bounded g.s. of problem (\ref{e1}), (\ref{c1}), and we come to a contradiction even with the condition of uniqueness of a bounded (!) g.s. Hence, $d_-=0$.

Conversely, assume that $d_-=0$, and that $u(t)$ is a g.s. of problem (\ref{e1}), (\ref{c1}) with zero initial data.
By the linearity, it is sufficient to prove that $u(t)\equiv 0$. Since $d_+(-A)=d_-(A)=d_-=0$ then, in correspondence with   the proof of Theorem~\ref{th2}, the operator $A$ generates the semigroup $u=e^{tA}u_0$ of g.s. to the equation ${u'+A^*u=0}$. Let $v_0\in D(A)$ and $v(t)=e^{(t_0-t)A}v_0$, $t\le t_0$. Then $v(t)\in C^1([0,t_0],H)\cap C([0,t_0],D(A))$, and $v'(t)=-Av(t)$.
Further, we choose such a function $\beta(s)\in C_0(\R)$ that $\supp\beta(s)\subset [-1,0]$, $\beta(s)\ge 0$, $\displaystyle\int\beta(s)ds=1$, and set for ${\nu\in\N}$  \
\[
\beta_\nu(s)=\nu\beta(\nu s), \quad \theta_\nu(t)=\int_t^{+\infty} \beta_\nu(s)ds.
\]
Obviously, $\beta_\nu(s)\in C_0(\R)$, $\supp\beta_\nu(s)\subset [-1/\nu,0]$, $\beta_\nu(s)\ge 0$,
$\displaystyle\int\beta_\nu(s)ds=1$. Therefore, the sequence $\beta_\nu(s)$ converges as $\nu\to\infty$ to the Dirac
$\delta$-measure in $\D'(\R)$ while the functions $\theta_\nu(t)\in C^1(\R)$ decrease, $\theta_\nu(t)=1$ for $t\le -1/\nu$, $\theta_\nu(t)=0$ for $t\ge 0$, and the sequence $\theta_\nu(t)$ converges pointwise as $\nu\to\infty$ to the function
$\theta(-t)$, where $\displaystyle\theta(s)=\left\{\begin{array}{ll} 0, & s\le 0, \\ 1, & s>0.
\end{array}\right. $ is the Heaviside function. The function $f(t)=v(t)\theta_\nu(t-t_0)$ lies in the space  $C_0^1(\R_+,H)\cap C_0(\R_+,D(A))$ (we agree that $f(t)=0$ for $t>t_0$), and by Remark~\ref{rem1}(2) it is a proper test function in relation (\ref{gs}) for the g.s. $u(t)$. Revealing this relation and taking into account the equality $f'(t)=v'(t)\theta_\nu(t-t_0)-v(t)\beta_\nu(t-t_0)=-Af(t)-v(t)\beta_\nu(t-t_0)$, we arrive at the equality
\[
-\int (u(t),v(t))\beta_\nu(t-t_0)dt=0.
\]
Since the scalar product $(u(t),v(t))$ is a continuous function (recall that the $H$-valued functions $v(t)$, $u(t)$ are, respectively, strongly and weakly continuous, cf. Remark~\ref{rem1}(1)), this equality in the limit as $\nu\to\infty$ implies that $(u(t_0),v_0)=(u(t_0),v(t_0))=0$ for each $v_0\in D(A)$. By the density of $D(A)$ in $H$, we conclude that $u(t_0)=0$.  Since $t_0>0$ is arbitrary, $u(t)\equiv 0$, as was to be proved.
\end{proof}

The backward Cauchy problem for equation (\ref{e1}), considered for the time $t<T$ with the Cauchy data
$u(T,x)=u_0(x)$ at the final moment $t=T$, after the change $t\to T-t$ is reduced to the standard problem (\ref{e1}), (\ref{c1}) but related to the operator $-A$ instead of $A$. Therefore, the uniqueness of g.s. to the backward Cauchy problem is equivalent to the condition $d_+=d_-(-A)=0$. In particular, the uniqueness of g.s. for both forward and backward
problems is equivalent to the requirement $d_+=d_-=0$, that is, to the condition that the operator $A$ is skew-adjoint.
We have proved the following

\begin{corollary}\label{cor1} Uniqueness of g.s. for both forward and backward
problems (\ref{e1}), (\ref{c1}) is equivalent to skew-adjointness of the operator $A$.
\end{corollary}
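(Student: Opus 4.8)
The plan is to derive the corollary directly from Theorem~\ref{th4} by applying it separately to the forward and the backward Cauchy problems and then intersecting the two resulting conditions.

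First I would dispose of the forward problem: by Theorem~\ref{th4} applied to the operator $A$, a g.s. of (\ref{e1}), (\ref{c1}) is unique if and only if $d_-(A)=0$. Next I would reduce the backward problem to the forward setting. The backward problem on $t<T$ with data $u(T)=u_0$ becomes, under the substitution $t\mapsto T-t$, the standard forward problem (\ref{e1}), (\ref{c1}) with $A$ replaced by $-A$. Since $A$ is closed and skew-symmetric, so is $-A$, and Theorem~\ref{th4} applies verbatim to $-A$: uniqueness of the backward g.s. holds iff $d_-(-A)=0$. The key identification here is $d_-(-A)=\codim\Im(E-(-A))=\codim\Im(E+A)=d_+(A)$, so backward uniqueness is equivalent to $d_+(A)=0$.

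Finally I would combine the two criteria: simultaneous uniqueness for the forward and backward problems is equivalent to $d_-=d_+=0$. As recalled in the discussion preceding Theorem~\ref{th2}, the condition $d_+=d_-=0$ is precisely the characterization of skew-adjoint operators (equivalently $A^*=-A$), which yields the claim.

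The only step demanding genuine care, rather than bookkeeping, is verifying that the time reversal $t\mapsto T-t$ transports the notion of generalized solution of Definition~\ref{def1} for $A$ into the corresponding notion for $-A$; that is, a function is a backward g.s. exactly when its time-reversal is a forward g.s. for $-A$. This reduces to checking that the integral identity (\ref{gs}) is invariant under the change of variable together with the sign change in the operator, which is routine but deserves to be stated explicitly. Everything else is the algebraic identity $d_-(-A)=d_+(A)$ and the already-recorded equivalence between vanishing deficiency indexes and skew-adjointness.
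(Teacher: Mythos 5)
Your proposal is correct and follows exactly the paper's own argument: the paper likewise reduces the backward problem via $t\mapsto T-t$ to the forward problem for $-A$, applies Theorem~\ref{th4} to both, and uses $d_-(-A)=d_+(A)$ together with the characterization $d_+=d_-=0$ of skew-adjointness. Your explicit remark that one should verify the time reversal transports Definition~\ref{def1} for $A$ into that for $-A$ is a point the paper leaves implicit, and is a welcome addition rather than a deviation.
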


\section{Conclusion}
We return to the case of the transport problem (\ref{tre}), (\ref{tri}), corresponding to the skew-symmetric operator $A_0 u(x)=a(x)\cdot\nabla u(x)$. Uniqueness of g.s. to this problem (both forward and backward) is well known in the case of smooth and bounded coefficients $a_i(x)$, $i=1,\ldots,n$, and in view of Corollary~\ref{cor1} the operator $A$ is skew-adjoint. Uniqueness of g.s. to the forward and backward Cauchy problems is established in celebrated paper by R.J.~DiPerna and P.L.~Lions \cite{DiL} for the coefficients, which have Sobolev regularity, later these results were extended to more general case  of $BV$-regular coefficients in the paper by L.~Ambrosio \cite{Amb}. In correspondence with Corollary~\ref{cor1} in each of these cases the operator $A$ is skew-adjoint. However, in the general case there are several examples of non-uniqueness of g.s. to the problem (\ref{tre}), (\ref{tri}), see \cite{Aiz,Br,CLR,PaTr}, so that skew-adjointness of the operator $A$ may fail.

Remark also that in the case of bounded fields of coefficients the statement of Corollary~\ref{cor1} applied to the problem  (\ref{tre}), (\ref{tri}) follows from results of \cite{BoCr}, see also \cite{prep,ufa}.


\begin{thebibliography}{99}

\bibitem{Aiz}
M. Aizenman, On vector fields as generators of flows. A counterexample
to Nelson's conjecture, Ann. of Math. 107 (1978) 287-296.
\bibitem{Amb}
L. Ambrosio, Transport equation and Cauchy problem for BV vector
fields, Invent. Math. 158 (2004) 227--260.
\bibitem{BoCr}
F. Bouchut, G. Crippa, Uniqueness, renormalization and smooth
approximations for linear transport equations, SIAM J. Math. Anal. 38 (2006) 1316--1328.
\bibitem{Br}
A. Bressan, An ill-posed Cauchy problem for a hyperbolic system in two
space dimensions, Rend. Sem. Mat. Univ. Padova 110 (2003) 103--117.
\bibitem{CLR}
F. Colombini, T. Luo, and J. Rauch, Uniqueness and nonuniqueness
for nonsmooth divergence free transport, S\'emin. \'Equ. D\'eriv. Partielles,
\'Ec. Polytech., Cent. Math., Palaiseau 2002-2003, Exp. no. XXII, 21 p.
(2003).
\bibitem{DiL}
R.J.~DiPerna, P.L.~Lions, Ordinary differential equations, transport
theory and Sobolev spaces, Invent. Math. 98 (1989) 511--547.
\bibitem{PaTr}
E.Yu. Panov, Generalized solutions of the Cauchy problem for a transport equation
with discontinuous coefficients, in: C. Bardos, A.V. Fursikov (Eds.), Instability in Models Connected with Fluid Flows. II, International Mathematical Series, Vol. 7, Springer, New York, 2008, pp.~23--84.
\bibitem{prep}
E.Yu. Panov, On one criterion of the uniqueness of generalized solutions for linear
transport equations with discontinuous coefficients, Arxiv:1504.00836, 17 Apr
2015.
\bibitem{ufa}
E.Yu. Panov, On generalized solutions to linear transport equations with
discontinuous coefficients. Mathematical Modeling of Processes and Systems. Collective monograph. S.A. Mustafina ed. Sterlitamak, 2018, pp. 18--51.
\bibitem{Phil}
R.S. Phillips, Dissipative operators and hyperbolic systems of partial differential equations, Trans. Amer. Math. Soc., Vol. 90 (1959) 193--254.
\end{thebibliography}
\end{document}